\newtheorem{theorem}{Theorem}[section]
\newtheorem{lemma}[theorem]{Lemma}
\theoremstyle{definition}
\newtheorem{definition}[theorem]{Definition}
\newtheorem{example}[theorem]{Example}
\theoremstyle{remark}
\newtheorem{remark}[theorem]{Remark}
\numberwithin{equation}{section}
\begin{document}

\title[Permutation formula for $A_n$]{The formula for the permutation of mutation sequences in $A_n$ straight orientation}
\author{Kiyoshi Igusa}
\address{Department of Mathematics, Brandeis University, Waltham, Massachusetts 02453}
\curraddr{Department of Mathematics, Brandeis University, Waltham, Massachusetts 02453}
\email{igusa@brandeis.edu}
\author{Ying Zhou}
\address{Department of Mathematics, Brandeis University, Waltham, Massachusetts 02453}
\curraddr{Department of Mathematics, Brandeis University, Waltham, Massachusetts 02453}
\email{yzhou935@brandeis.edu}

\subjclass[2010]{Primary 13F60}


\keywords{cluster algebra, maximal green sequence, quiver}

\begin{abstract}
In this paper we state and prove a formula for the  permutations associated to reddening and loop sequences in $A_n$ straight orientation using the picture group. In particular this applies to maximal green sequences in $A_n$ straight orientation. Furthermore we extend the definition and formula of the associated permutation to arbitrary mutation sequences based on our results. We introduce the concept of standard matrices which gives a canonical order on indecomposable components of cluster-tilting objects. Preservation of standardness of $C$-matrices by a combination of a mutation and its associated permutation gives the formula.\\
\end{abstract}

\maketitle

\section{Introduction}
\subsection{Maximal Green Sequences and Reddening Sequences}
\indent A \textit{cluster quiver} is defined as a quiver which is a directed graph with possibly multiple arrows between two vertices but without loops or 2-cycles. The vertex set of a quiver $Q$ is denoted $Q_0$ and its edges set is denoted $Q_1$. For a cluster quiver $Q$ with $n$ vertices the edge set is determined by the $n\times n$ matrix $B$ with entries $b_{ij}$ equal to the number of arrows from $i$ to $j$ minus the number of arrow from $j$ to $i$. This is called the \emph{exchange matrix}.

\begin{definition}
(Keller, 2011) Let $Q$ be a cluster quiver. The \textit{framed quiver} $\hat{Q}$ of $Q$ is obtained from $Q$ by adding a vertex $i'$ and an arrow $i\rightarrow i'$ for every $i\in Q_0$.\cite{Kel11}\\
Let $Q$ be a cluster quiver. The \textit{coframed quiver} $\breve{Q}$ of $Q$ is obtained from $Q$ by adding a vertex $i'$ and an arrow $i'\rightarrow i$ for every $i\in Q$.\cite{BDP13}
\end{definition}

\indent If $Q$ has $n$ vertices, $\hat Q$ and $\breve Q$ have $2n$ vertices. Framed and coframed quivers are both special cases of \textit{ice quivers} which we define here. An \textit{ice quiver} is a quiver $Q$ where a possibly empty set, $F\subseteq Q_0$, consists of vertices that may not mutate.\cite{BDP13} We recall that, for any vertex $k\in Q_0$ the \emph{mutation of $Q$ in the direction of $k$} is the quiver $\mu_k(Q)$ obtained from $Q$ by
\begin{enumerate}
\item reversing all arrows to and from vertex $k$ and
\item adding $b_{ik}|b_{kj}|$ arrows from $i$ to $j$ when $b_{ik},b_{kj}$ have the same sign.
\item removing any 2-cycles created in step (2).\cite{FZ01}
\end{enumerate}

 For an ice quiver $(Q,F)$ we are not allowed to mutate at elements of $F$, so we call them \textit{frozen vertices}. A non-frozen vertex $i$ is \textit{green} if no arrow from a frozen vertex to $i$ exists. Otherwise it is \textit{red}.\cite{Kel11}\\ 
\indent Now we can define green sequences, maximal green sequences and reddening sequences since we have the concepts of ``green" and ``red". Green sequences and maximal green sequences were introduced by Bernhard Keller.\cite{Kel11} A more general form, reddening sequences or green-to-red sequences are used in \cite{BHIT15}\cite{Mul15}.

\begin{definition}
(Keller, 2011) (1) A \textit{green sequence} is a sequence $\mathbf{i}=(i_1, i_2,\cdots, i_N)$ such that for all $1\leq t\leq N$ the vertex $i_t$ is green in the partially mutated quiver $\hat{Q}(\mathbf{i},t)=\mu_{i_{t-1}}\cdots\mu_2\mu_1(\hat{Q})$.\cite{Kel11}\\
(2) A \textit{maximal green sequence} is a green sequence such that $\hat{Q}(\mathbf{i},N)$ does not have any green vertices (and hence can not be extended).\cite{Kel11}\\
(3) A \textit{green-to-red sequence} or a \textit{reddening sequence}, is a sequence $\mathbf{i}=(i_1, i_2,\cdots, i_N)$ that transforms $\hat{Q}$ to a quiver $\hat{Q}(\mathbf{i},N) = \mu_{i_N}\cdots\mu_2\mu_1(\hat{Q})$ such that $\hat{Q}(\mathbf{i},N)$ does not have any green vertices.\cite{Mul15}
\end{definition}

\subsection{The Permutation Associated with a Reddening Sequence}
\indent All reddening sequences have associated permutations. When comparing the quivers obtained from transforming the same framed quiver using two different reddening sequences, it is easy to see that they are just one permutation away from each other: If you do a correct permutation of vertices (that means both rows and columns together) you can transform one such matrix into another. In particular any quiver obtained by using a reddening sequence to transform a framed quiver is one permutation away from the coframed quiver.\\
\indent Here is the formal definition of such a permutation:

\begin{definition}
A \textit{permutation} from an ice quiver $(Q,F)$ to $(Q',F)$ is an isomorphism of quivers $Q\rightarrow Q'$ that preserve $F$.\cite{BDP13}
\end{definition}

\indent We can denote the permutation by elements of the permutation group $S_{|Q|-|F|}$. We have a result from \cite{BDP13} which helps us define the permutation:

\begin{theorem}
(Brustle-Dupont-Perotin \cite{BDP13}) Let $Q$ be a cluster quiver and let $Q'$ be a quiver that is a result of a reddening sequence on $\hat{Q}$, then  $Q'$ equals to a permutation of $\breve{Q}$. In other words, for a reddening sequence $\mathbf{i}=(i_1,\cdots, i_N)$, for some $\rho\in S_n$ we have $\mu_{i_N}\cdots\mu_{i_1}\hat{Q}=\rho\breve{Q}$.
\end{theorem}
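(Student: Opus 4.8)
The plan is to understand why a reddening sequence must produce a quiver that differs from the coframed quiver only by a permutation of the mutable vertices. Let me think about the underlying structure.

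First I would recall the fundamental fact that mutation commutes with the $c$-vectors / $g$-vectors framework. When we mutate the framed quiver $\hat Q$, the frozen vertices $i'$ track the "sign coherence" of $c$-vectors. Each mutable vertex $i$ carries a $c$-vector recording the net arrows between it and the frozen vertices.

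The key observation is the sign-coherence theorem: at every stage of any mutation sequence, each mutable vertex is either green (all arrows from frozen vertices point toward it with positive coefficients) or red, never mixed.

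At the end of a reddening sequence, every mutable vertex is red. I would want to show the resulting $c$-matrix is a permutation of $-I$, which is exactly the $c$-matrix of the coframed quiver $\breve Q$.

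The main structural input I expect to invoke is the theory of $c$-matrices and the fact that a reddening sequence is characterized by its terminal $c$-matrix being a signed permutation matrix — in the all-red case, $-P$ for a permutation matrix $P$. This is the heart of the Brüstle–Dupont–Périotin result and I would treat it as the main obstacle.

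I would proceed as follows. Let me set up the $c$-matrix $C(\mathbf{i},t)$ whose rows are the $c$-vectors of the mutable vertices at step $t$; the entry $c_{ij}$ is the number of arrows between mutable vertex $i$ and frozen vertex $j'$. At $t=0$ this is the identity $I$. Sign-coherence guarantees each row is either nonnegative (green) or nonpositive (red). Since $\mathbf i$ is a reddening sequence, at $t=N$ every row is nonpositive.

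The crucial claim is that the terminal $c$-matrix equals $-P$ for some permutation matrix $P$, i.e., each row is $-e_{\sigma(i)}$ for a single standard basis vector. This follows because the $c$-vectors at any mutation stage form a basis related to the positive roots of the associated root system, and the only nonpositive $c$-vectors available are the negatives of simple roots. I would make this precise using the known bijection between $c$-vectors and real Schur roots, or equivalently invoke the maximal green / reddening characterization that the final cluster must coincide with the initial cluster up to relabeling.

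Having established $C(\mathbf i,N)=-P$, I would argue that the full exchange matrix of $\hat Q(\mathbf i,N)$ on mutable vertices is recovered by conjugating $B$ via $P$, because the mutable-mutable block transforms by a matrix determined by the $c$-matrix (again via the standard $F$- and $g$-vector compatibility). I would conclude $\hat Q(\mathbf i,N)=\rho \breve Q$ where $\rho=P$ is the permutation $\sigma$, matching the statement.

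The hard part will be proving that the terminal all-red $c$-matrix is necessarily a negative permutation matrix rather than some arbitrary nonpositive integer matrix; this is where the deep input (sign-coherence plus the basis property of $c$-vectors, or the representation-theoretic interpretation via the bounded derived category) is essential, and I would rely on the cited framework rather than reprove it from scratch.
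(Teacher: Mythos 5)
A preliminary remark: the paper never proves this statement. It is imported verbatim from Br\"ustle--Dupont--P\'erotin \cite{BDP13} and used as a black box, so there is no internal proof to compare yours against; your attempt has to be judged against what an actual proof requires. Your overall architecture is the right one: show that the terminal $c$-matrix of a reddening sequence is $-P$ for a permutation matrix $P$, then recover the mutable part of the final quiver from its $c$-matrix via the identity $B=CB_0C^t$ of Nakanishi--Zelevinsky \cite{NZ11} (which the paper itself records), yielding $\mu_{i_N}\cdots\mu_{i_1}\hat Q=\rho\breve Q$.

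The gap is in your justification of the crucial claim, and it is not a deferred technicality but a wrong statement. You assert that ``the only nonpositive $c$-vectors available are the negatives of simple roots.'' This is false: an individual nonpositive $c$-vector can be the negative of any positive real Schur root, and the paper's own $A_2$ example shows it --- mutating $\hat Q$ at $2$ and then at $1$ produces the $c$-vector $-\beta_{02}=(-1,-1)$. What must be proved is the collective statement: if all $n$ rows of $C$ are simultaneously nonpositive, then together they are the negatives of the $n$ simple roots. Moreover, the inputs you name (sign coherence plus the basis, i.e.\ unimodularity, property of $c$-vectors) cannot give this, even if one adds that every row is a negative root: the matrix with rows $-\beta_{02}=(-1,-1)$ and $-\beta_{12}=(0,-1)$ is nonpositive, unimodular, and has negative roots as rows, yet is not a negative permutation matrix. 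What excludes such configurations is precisely the Speyer--Thomas result the paper quotes as Theorem 2.4: $c$-vectors of the same sign correspond to pairwise Hom-orthogonal indecomposables --- here $hom(\beta_{12},\beta_{02})\neq 0$ because of the socle inclusion $S_2\hookrightarrow P_1$ --- combined with the fact that $n$ pairwise Hom-orthogonal indecomposables over a rank-$n$ hereditary algebra must be the $n$ simples; alternatively one can invoke Nakanishi--Zelevinsky duality, by which $C^{-1}$ is again a $c$-matrix and hence row-sign-coherent, forcing $-C$ and $-C^{-1}$ to be mutually inverse nonnegative matrices and therefore permutation matrices. Finally, your fallback justification --- invoking ``the characterization that the final cluster must coincide with the initial cluster up to relabeling'' --- is circular, since that is exactly the content of the theorem being proved.
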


\begin{definition}
(Garver-Musiker \cite{GM14}) The \textit{permutation of a reddening sequence} $\mathbf{i}$ is $\rho$ for which $\mu_{i_N}\cdots\mu_{i_1}\hat{Q}=\rho\breve{Q}$.
\end{definition}

At a workshop in Snowbird, Utah in 2014 the question was raised: What is the formula for this permutation? We answer this question for the quiver of type $A_n$ with straight orientation: $1\to 2\to 3\to\cdots\to n$.\\
\indent Here is one of the simplest examples of the concept of the permutation:
\[
\begin{tikzcd}
1 \arrow{r}\arrow{d} & 2\arrow {d} \\ 
1' & 2'
\end{tikzcd}
\xrightarrow{\ \mu_1\ }
\begin{tikzcd}
1 & 2 \arrow{l}\arrow {d} \\ 
1'\arrow{u} & 2'
\end{tikzcd}
\xrightarrow{\ \mu_2\ }
\begin{tikzcd}
1\arrow{r} & 2 \\ 
1'\arrow{u} & 2' \arrow {u}
\end{tikzcd}
\]
\[
	\quad\big\downarrow^{\mu_2} \qquad\qquad\qquad\qquad\qquad\qquad\quad\quad\qquad \big\uparrow_{(12)} 
\]
\[
\begin{tikzcd}
1 \arrow{dr}\arrow{d} & 2\arrow {l} \\ 
1' & 2'\arrow{u}
\end{tikzcd}
\xrightarrow{\ \mu_1\ }
\begin{tikzcd}
1\arrow{r} & 2 \arrow {dl} \\ 
1'\arrow{u} & 2'\arrow{ul}
\end{tikzcd}
\xrightarrow{\ \mu_2\ }
\begin{tikzcd}
1& 2\arrow{l}  \\ 
1'\arrow{ur} & 2' \arrow {ul}
\end{tikzcd}
\]
It is obvious that the result of $\mu_2\mu_1$ and $\mu_2\mu_1\mu_2$ are not identical, though they can be transformed into each other by a single permutation on vertices.\\
\indent Following \cite{BDP13} we make the $c$-vectors into the rows of the $c$-matrix $C$ and place it to the right of the exchange matrix $B$. For example, the extended exchange matrix for the initial framed quiver above is:
\[
	\tilde B_0=[B_0|C]=\left[
	\begin{array}{cc|cc}
	0 & 1 & 1 & 0\\
	-1 & 0 & 0 & 1
	\end{array}
	\right]
\] The extended exchange matrix for an ice quiver $(Q,F)$ is the $n\times 2n$ matrix $\tilde B=[B|C]$ whose $ij$-entry is the number of arrows in $Q$ from vertex $i$ to vertex $j$ minus the number of arrows from $j$ to $i$ with $i'\in F$ counted as $i+n$. We can also conduct permutation on matrices. In particular permutation of rows of the $c$-matrix can be easily induced by permutation on non-frozen vertices.

\begin{definition}
For a matrix $M=(M_1,\cdots, M_n)^t$ and a permutation $\sigma\in S_n$, if $C=(M_{\sigma^{-1}(1)},\cdots,M_{\sigma^{-1}(n)})^t$, we denote this as $C=\sigma(M)$. Thus $\sigma(M)$ is obtained from $M$ by permuting its rows by $\sigma$.
\end{definition}

\subsection{Picture groups}
\indent We also need to use the concept of the picture groups in order to state the formula below.\\
\indent For a quiver $Q$ of finite type (A,D,E), let $\Phi^+(Q)$ denote the set of all dimension vectors of indecomposable representations of $Q$ over a field $K$. Elements $\beta\in\Phi^+(Q)$ are called (positive) \textit{root}. For each root $\beta$ there is, up to isomorphism, a unique indecomposable modules $M_\beta$ with dimension vector $\beta$. We say that $\beta'$ is a \emph{subroot} of $\beta$ and write $\beta'\subseteq \beta$ if $M_{\beta'}\subseteq M_\beta$. Given two roots $\alpha,\beta$ we use the notation $hom(\alpha,\beta),ext(\alpha,\beta)$ for the dimensions over the ground field of $Hom(M_\alpha,M_\beta)$ and $Ext(M_\alpha,M_\beta)$.

Let $D(\beta)\subseteq\mathbb{R}^n$ be given by
\[
D(\beta)= \{x\in\mathbb{R}^n: \left<x,\beta\right>=0,\ \left<x,\beta'\right>\leq 0\text{ when }\beta'\subseteq\beta\}.\]
Here $\left<\,,\,\right>:\mathbb R^n\to \mathbb R$ denotes the \emph{Euler-Ringel pairing} given by $\left<x,y\right>=x^tEy$ where $E$ is the \emph{Euler matrix} with entries $E_{ij}=hom(e_i,e_j)-ext(e_i,e_j)$. The union of $D(\beta)$ for all these roots divides $\mathbb{R}^n$ into \textit{compartments}. The boundary of each compartment is the union of portions of these $D(\beta)$ which we call \textit{walls}.\cite{IT17}\cite{IOTW4} Sometimes we abuse notation and use the root $\beta$ to mean the wall $D(\beta)$ when the meaning is clear. We also use the notation $+\beta$ to mean the wall $\beta$ is a part of the boundary of a compartment $\mathcal{U}$ and for any point $x\in\mathcal{U}$, $\left<x,\beta\right>\ >0$. Similarly we have the notation $-\beta$. For example $+\beta-\beta'$ means that $\beta$ and $\beta'$ are parts of the boundary of a compartment $\mathcal{U}$ and for any point $x\in\mathcal{U}$, $\left<x,\beta\right>\ >0$ and $\left<x,\beta'\right>\ <0$.\\
\indent In $A_n$, the positive roots are $\beta_{ij}=e_{i+1}+e_{i+2}+\cdots+e_j$ ($0<i<j<n$).

\begin{definition}
\cite{IOTW4},\cite{IT17} The \textit{picture group} of an acyclic quiver of finite type $Q$ is a group $G(Q)=\left<S|R\right>$ with $S$ in bijection with $\Phi^+(Q)$ (the generator corresponding to $\beta$ is $x(\beta)$) and $R$ the set of relations $x(\beta_i)x(\beta_j)=\Pi x(\gamma_k)$ with $\gamma_k$ running over all positive roots which are linear combinations $\gamma_k = a_k\beta_i+b_k\beta_j$ with $a_k/b_k$ increasing (going from 0/1 where $\gamma_1=\beta_j$ to 1/0 where $\gamma_k=\beta_i$) for any pair $(\beta_i,\beta_j)$ such that they are Hom-orthogonal and $ext(\beta_i,\beta_j)=0$.\cite{IT17}
\end{definition}

\indent For a quiver of type $A_n$ we often simplify the notation of $x(\beta_{ij})$ to $x_{ij}$ which we use interchangeably with $x(\beta_{ij})$. The picture group for $A_n$ straight orientation ($1\to\cdots\to n$) is $G(A_n)=\{S|R\}$, $S=\{x_{ij}|0\leq i<j\leq n\}$, $R=\{x_{ij}x_{kl}=x_{kl}x_{ij}|[i,j]\cap[k,l]=\emptyset, [i,j]\text{ or }[k,l],  i,j,k,l \text{ distinct}\}\cup\{x_{jk}x_{ij}=x_{ij}x_{ik}x_{jk}|0\leq i<j<k\leq n\}$.\\
\indent We say that the picture group element $x_{ij}$ is \emph{allowed to act} on $(Q,F)$ if $\beta_{ij}$ is one of the $c$-vectors of $Q$ (one of the rows of $C$). Then the mutation $x_{ij}(Q)$ is defined to be $\mu_k(Q)$ if $\beta_{ij}=c_k$ is the $k$-th $c$ vector of $Q$. Otherwise $x_{ij}(Q)$ is undefined. Similarly, $x_{ij}^{-1}(Q)$ is defined and equal to $\mu_k(Q)$ if $-\beta_{ij}=c_k$ is the $k$-th $c$-vector of $Q$. By the sign coherence property\cite{FST11}\cite{ST12} we know that the exchange matrix $B$ is uniquely determined by the $c$-matrix $C=(c_1,\cdots,c_n)^t$ and the initial exchange matrix $B_0=E^t-E$ by the equation $B=CB_0C^t$\cite{NZ11}. Hence we refer to mutations on the extended exchange matrix $\tilde{B}$ as mutations on $C$. For example $x_{ij}(C)$ will be the right half of the extended exchange matrix $\tilde{B'}=\mu_k(\tilde{B})=\mu_k[CB_0C^t,\ C]$ if the $k$-th row of $C$ is $c_k=\beta_{ij}$.\\
\indent Todorov proved with the first author in \cite{IT17} that there exists a bijection between the set of maximal green sequences and the set $\mathcal{P}(c)$ of positive expressions of the Coxeter element of the picture group for any acyclic valued quiver of finite type. In other words, any positive expression $w\in\mathcal{P}(c)$ is a sequence of allowable green mutations on the initial framed quiver $\hat Q$ giving a maximal green sequence and all maximal green sequences are given uniquely in this way. This applies in particular to $A_n$ straight orientation where the Coxeter element is
\[
	c=\prod_{k=1}^n x_{k,k-1}=x_{n,n-1}\cdots x_{21}x_{10}.
\]
As indicated, multiplication order is from right to left.\\
\indent Finally, we observe that the actions of picture group elements and permutations commute since picture group elements perform mutations independently of how nonfrozen vertices are numbered.

\section{The formula of the associated permutation in $A_n$ straight orientation}
\subsection{Loop Sequences}
\indent Now let's define a new term, namely \textit{loop sequences} which is essential to the discussion about the permutation:

\begin{definition} 
A loop sequence $w$ is a sequence of mutations $\mu_{i_k}\cdots\mu_{i_1}$ on an ice quiver $(Q,F)$ such that $\mu_{i_k}\cdots\mu_{i_1}(Q) = \rho(Q)$ for some permutation $\rho$.
\end{definition}

\indent Using the c-vector theorem from \cite{ST12} we can see that the only nontrivial effect loop sequences can have on an extended matrix is a permutation. Hence loop sequences are equivalent to permutations on an extended matrix.

\begin{definition}
For any loop sequence $w$ the permutation $\rho$ such that $w(\tilde{B})=\rho(\tilde{B})$ is defined as \textit{the associated permutation of the loop sequence $w$}.
\end{definition}

\indent In essence for all acyclic quivers, green-to-red sequences in general and maximal green sequences in particular do not have a natural definition of the permutation: The traditional one in essence is the permutation of an associated loop sequence: Take the reddening sequence and then do mutations at sinks only, go over all non-frozen vertices and return to the origin which constitutes the loop sequence we need.

\subsection{The formula of the permutation}
The formula is given as:

\begin{theorem}
In $A_n$ straight orientation, the permutation associated with a picture group element $\prod_{k}x_{i_kj_k}^{\delta_k}$ which is in correspondence with a reddening or loop sequence acting on a $c$-matrix with associated permutation $\sigma$ is \[
\rho(\prod_{k}x_{i_kj_k}^{\delta_k}) = \sigma(\prod_{k}(i_k+1,j_k))^{-1}\sigma^{-1}.\]
Here $\delta_k\in\{+,-\}$ and multiplication is right to left.
\end{theorem}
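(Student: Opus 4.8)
The plan is to reduce the global statement to a single-step analysis of how one mutation disturbs a canonically ordered $c$-matrix, and then to assemble these steps using the commutativity of picture-group actions with row permutations. First I would make precise the notion of a \emph{standard matrix}: given a $c$-matrix whose rows are signed roots $\pm\beta_{ij}$, reorder the rows so that a positive $c$-vector $+\beta_{ij}$ (the interval $[i+1,j]$) is placed in row $i+1$, its left endpoint, while a negative $c$-vector $-\beta_{ij}$ is placed in row $j$, its right endpoint. The first task is to show that this prescription is well defined on every $c$-matrix occurring along a reddening or loop sequence, i.e. that the assigned positions run over $\{1,\dots,n\}$ without repetition; this uses the non-crossing/compatibility structure of the $c$-vectors of $A_n$, and it is what makes ``standard'' a genuine total order on the indecomposable summands. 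For the framed quiver the standard matrix is $I$ and for the coframed quiver it is $-I$.

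The heart of the argument is the following single-step lemma: if $C$ is standard and $x_{ij}^{\delta}$ is allowed to act, then applying the transposition $(i+1,\,j)$ to the rows of $x_{ij}^{\delta}(C)$ again yields a standard matrix. The reason the transposition is exactly $(i+1,\,j)$ is already visible on the mutated row: the mutation flips $+\beta_{ij}$ to $-\beta_{ij}$ (or conversely), so the signed interval moves between its left-endpoint slot $i+1$ and its right-endpoint slot $j$. The substance of the lemma is that every \emph{other} $c$-vector altered by the mutation is replaced by a signed interval whose standard slot is unchanged, \emph{except} for the single partner that exchanges slots $i+1$ and $j$ with the mutated vector. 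I would prove this by a case analysis organized by how the interval of each $c$-vector overlaps $[i+1,j]$ (disjoint, nested, or overlapping on one side), computing the new interval and its sign from the $A_n$ $c$-vector mutation rule and checking in each case that its left or right endpoint lands in the predicted slot.

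Granting the lemma, the formula follows by induction on the length of the word. Write the state after $t$ mutations as $S_t$ and let $\mathrm{std}_t$ be the standard matrix with the same rows, so $S_t=\sigma_t(\mathrm{std}_t)$ defines $\sigma_t$, with $\sigma_0=\sigma$ the associated permutation of the initial $c$-matrix. Since picture-group elements act independently of the numbering of nonfrozen vertices, $x_{i_tj_t}^{\delta_t}$ commutes with row permutations, so $S_t=\sigma_{t-1}\bigl(x_{i_tj_t}^{\delta_t}(\mathrm{std}_{t-1})\bigr)$; the lemma rewrites the inner term as $(i_t+1,\,j_t)^{-1}(\mathrm{std}_t)$, giving the recursion $\sigma_t=\sigma_{t-1}(i_t+1,\,j_t)^{-1}$. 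Hence $\sigma_N=\sigma\prod_{t=1}^N (i_t+1,\,j_t)^{-1}$. Reading this product right to left and using that a loop sequence returns to the same standard matrix (so $\rho=\sigma_N\sigma^{-1}$), while a reddening sequence ends at $-I$ (so $\rho=\sigma_N$ with $\sigma=\mathrm{id}$), both cases collapse to $\rho(\prod_k x_{i_kj_k}^{\delta_k})=\sigma\bigl(\prod_k (i_k+1,\,j_k)\bigr)^{-1}\sigma^{-1}$, as claimed.

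The main obstacle is the single-step lemma, and within it the global bookkeeping: a single mutation in $A_n$ can change several $c$-vectors at once, yet the claim is that all but two of them return to their own standard slots and only the pair at slots $i+1$ and $j$ is exchanged. Verifying this---simultaneously controlling the new intervals, their signs, and their endpoints for every affected summand---is the delicate computation, and the well-definedness of the standard order (no two summands competing for the same slot) must be secured first so that the recursion is even meaningful.
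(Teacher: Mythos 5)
Your proposal is correct and takes essentially the same route as the paper: the identical notion of standard matrix (with $+\beta_{ij}$ slotted at row $i+1$ and $-\beta_{ij}$ at row $j$), the same key single-step lemma that $(i_k+1,j_k)\,x_{i_kj_k}^{\delta_k}$ preserves standardness proved by the same interval/endpoint case analysis (the paper's cases rely on the Speyer--Thomas Hom-orthogonality constraints, which is exactly your ``non-crossing/compatibility structure''), and the same assembly using commutativity of picture-group actions with row permutations, with reddening sequences ending at $-I$ and loop sequences returning to their starting matrix. Your explicit recursion $\sigma_t=\sigma_{t-1}(i_t+1,j_t)^{-1}$ merely packages in one step what the paper does in three (reddening from $I_n$, loops from a standard matrix, then conjugation by $\sigma$ for an arbitrary start), so the difference is organizational rather than mathematical.
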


\indent This formula works for any maximal green, reddening, and loop sequences. It also extended the definition of an associated permutation to the set of arbitrary finite sequences of mutations in $A_n$ straight orientation. One interesting property of $A_n$ is that the associated permutation of a mutation only depends on the $c$-vector but not which cluster-tilting object on which the mutation is conducted. The associated permutation in the general case seems far less regular.

\subsection{Forbidden pairs of $c$-vectors}
\indent Since we use picture groups and related structures to prove the theorem, we need to examine what kind of pairs of walls can not exist in any compartment. This is given by the following theorem of Speyer and Thomas.

\begin{theorem}\cite{ST12}\label{forbidden pairs of walls} The $c$-vectors of any cluster tilting object for any acyclic quiver are (positive or negative) real Schur roots. For any two such roots of the same sign, the corresponding indecomposable modules are Hom-orthogonal. For any pair of $c$-vectors of opposite sign, say $\alpha,-\beta$, $hom(\alpha,\beta)=0=ext(\alpha,\beta)$.
\end{theorem}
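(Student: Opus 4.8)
This is the theorem of Speyer and Thomas \cite{ST12}; the plan below reconstructs a proof organized so that all three assertions flow from one structural description of the $c$-vectors of a seed. I would induct on the number of mutations needed to reach the seed from the initial one, maintaining as inductive hypotheses that every row of $C$ is $\pm$ a real Schur root and that the positive rows are the dimension vectors of the simple objects of a wide subcategory of $\mathrm{mod}\,KQ$ (and, symmetrically, so are the negatives of the negative rows). Parts one and two of the theorem then read off immediately, and part three comes from the associated torsion pair together with the Euler--Ringel form.

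\emph{Base case and part one.} At the initial seed $C=I_n$, the rows are the simple roots $e_1,\dots,e_n$; since $Q$ is acyclic each simple $S_i=M_{e_i}$ is a brick with $Ext^1(S_i,S_i)=0$, so $e_i$ is a real Schur root, and the $S_i$ are exactly the simples of $\mathrm{mod}\,KQ$. For the inductive step I would use sign coherence \cite{FST11}\cite{ST12} (equivalently $B=CB_0C^t$, \cite{NZ11}) to write each row as $\epsilon_i\beta^{(i)}$ with $\epsilon_i\in\{+,-\}$ and $\beta^{(i)}$ a positive root; mutation at $k$ then acts by
\[
c_k\longmapsto -c_k,\qquad c_i\longmapsto c_i+[\epsilon_k b_{ik}]_+\,c_k\quad(i\neq k),
\]
with $[x]_+=\max(x,0)$ and $B$ the current exchange matrix. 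I would realize this inside the cluster category as the replacement of the summand $M_k$ of the cluster-tilting object by its exchange partner through a minimal approximation triangle $M_k\to E\to M_k^{\ast}$, whose terms have dimension vectors reproducing the displayed formula; since the mutated object is again cluster-tilting, its rows are again signed real Schur roots, proving part one and preserving both inductive hypotheses.

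\emph{Parts two and three.} If two rows have the same sign then, up to that sign, they are among the simples of a single wide subcategory, and simple objects of an abelian category are pairwise Hom-orthogonal by Schur's lemma; this gives part two. For part three write the opposite-sign pair as $\alpha$ and $-\beta$ with $\alpha,\beta$ positive roots. The positive rows generate a torsion class $\mathcal T$ and the negatives of the negative rows a torsion-free class $\mathcal F$, with $M_\alpha\in\mathcal T$ and $M_\beta\in\mathcal F$; the defining property $Hom(\mathcal T,\mathcal F)=0$ of a torsion pair yields $hom(\alpha,\beta)=0$. Combining this with the Euler--Ringel identity $\langle\alpha,\beta\rangle=hom(\alpha,\beta)-ext(\alpha,\beta)$, it remains only to show $\langle\alpha,\beta\rangle=0$, which I would obtain from the rigidity of the seed (no self-extensions of the cluster-tilting object) transported across the torsion pair; then $ext(\alpha,\beta)=0$ follows.

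\emph{Main obstacle.} The hard part will be the homological bookkeeping in the inductive step: verifying that the approximation triangle $M_k\to E\to M_k^{\ast}$ neither destroys the orthogonality among the unchanged simples nor creates new maps or extensions with $M_k^{\ast}$, and that the torsion pair is updated correctly so that the opposite-sign vanishing $ext(\alpha,\beta)=0$ persists. This is exactly where acyclicity and the brick structure of real Schur roots are essential, and in finite type it is easier because all roots are real and only finitely many compartments occur. A self-contained alternative, closer to the rest of this paper, would bypass the induction and instead invoke the bijection between compartments of the semi-invariant picture and cluster-tilting objects \cite{IOTW4}\cite{IT17}, reducing the whole statement to the pairwise compatibility of the walls $D(\beta)$ that co-bound a single compartment, read off directly from the defining inequalities $\langle x,\beta'\rangle\le 0$ for subroots $\beta'\subseteq\beta$.
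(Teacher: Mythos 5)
The first thing to say is that the paper contains no proof of this statement to compare against: Theorem~\ref{forbidden pairs of walls} is imported verbatim from Speyer--Thomas \cite{ST12} and used as a black box in the proof of the paper's main lemma. So your sketch can only be judged on its own terms, as a reconstruction of the argument in \cite{ST12}. On those terms, the overall strategy is the right one (it is essentially what Speyer and Thomas do): induction along mutation sequences, sign coherence, same-sign $c$-vectors realized as simples of wide subcategories (giving Hom-orthogonality by Schur's lemma), and a torsion-pair argument for the mixed-sign case.

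However, the two steps you treat as routine are exactly where the content lies, and as written both are gaps. First, in the inductive step, ``since the mutated object is again cluster-tilting, its rows are again signed real Schur roots'' is not a deduction: the rows of $C$ are not the dimension vectors of the summands of the cluster-tilting object, and passing from the exchange triangle $M_k\to E\to M_k^{\ast}$ to the displayed recursion on $c$-vectors requires the $c$-vector/$g$-vector (tropical) duality of \cite{NZ11} together with an argument that the simples of the updated wide subcategories are again bricks; maintaining that structure under mutation is the bulk of the Speyer--Thomas paper, not a corollary of ``the mutated object is cluster-tilting.'' Second, for the opposite-sign pair $\alpha,-\beta$, the torsion-pair axiom gives only $hom(\alpha,\beta)=0$; a torsion pair carries no Ext-vanishing axiom (that would be a \emph{split} torsion pair). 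Your remaining claim that $\langle\alpha,\beta\rangle=0$ follows ``from rigidity transported across the torsion pair'' is not an argument but a restatement of the goal: once $hom(\alpha,\beta)=0$, the identity $\langle\alpha,\beta\rangle=hom(\alpha,\beta)-ext(\alpha,\beta)$ makes $\langle\alpha,\beta\rangle=0$ literally equivalent to $ext(\alpha,\beta)=0$, so nothing has been reduced. Filling this requires the finer structure of the two wide subcategories attached to the seed, which is where \cite{ST12} does real work. Your closing alternative --- invoking the bijection between compartments of the semi-invariant picture and cluster-tilting objects \cite{IOTW4}\cite{IT17} and reading the orthogonality off the walls $D(\beta)$ bounding one compartment --- is indeed closer to the framework of this paper, but it, too, is a citation rather than a proof, since the wall-crossing description of compartments already presupposes the orthogonality statements being proved.
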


\subsection{Proof of the formula}
\indent The basic idea in proving the theorem is below:\\
\indent Let's first prove it in the simple case, namely when the sequence of mutations begin at a standard $C$-matrix which includes the case of reddening sequences and loop sequences that start from the framed or coframed quivers. In this case the formula is simplified to $\rho(\prod_{k}x_{i_kj_k}^{\delta_k}) = \prod_{k}(i_k+1,j_k)^{-1}$ where inverse means reversing the order of multiplication. Since picture group elements commute with permutations, what we want to prove can be reduced to  $\rho(\prod_{k}(i_k+1,j_k)x_{i_kj_k}^{\delta_k})=id$. This property can further be reduced to proving that for all $k$, $(i_k+1,j_k)x_{i_kj_k}^{\delta_k}$ keeps the c-vectors in a ``standard order,'' a concept which we now define.

\begin{definition}
An $n\times n$ matrix $M\in M_n(\mathbb{Z})$ is \textit{standard} if the following holds:\\
1. The diagonal entries are all nonzero.\\
2. All positive entries can only exist on the diagonal or above. and all negative entries can only exist on the diagonal or below.\\
3. All rows are in the form $\pm\beta_{ij}$.
\end{definition}

\begin{remark}\label{rem: standard matrices}It is easy to see that all rows of the form $-\beta_{ij}$ has to be the $(i+1)$-th row and all rows of the form $\beta_{ij}$ has to be the $j$-th row since all other positions violate either Axiom 1 or 2.
\end{remark}

\begin{example}
\indent Here are several examples:\\\\
$\begin{bmatrix}
1 & 1\\
0 & -1\\
\end{bmatrix}$ and
$\begin{bmatrix}
1 & 0\\
-1 & -1\\
\end{bmatrix}$ are standard matrices because all three axioms hold.\\\\
$\begin{bmatrix}
0 & -1\\
-1 & 0\\
\end{bmatrix}$ and
$\begin{bmatrix}
-1 & -1\\
1 & 0\\
\end{bmatrix}$ are not standard matrices since axioms 1 and 2 are violated.\\
\end{example}
\indent Now let's prove one more lemma, namely if any permutation of rows is performed on a standard matrix, the result is not standard unless the permutation is trivial.

\begin{lemma}
The only permutation of rows on a standard matrix that results in a standard matrix is the trivial permutation.
\end{lemma}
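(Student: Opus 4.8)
The plan is to reduce everything to Remark~\ref{rem: standard matrices}, which asserts that in any standard matrix the position of each row is dictated entirely by its content: a row equal to $-\beta_{ij}$ must occupy row $i+1$ and a row equal to $+\beta_{ij}$ must occupy row $j$. Write $\mathrm{pos}(\pm\beta_{ij})$ for this forced row index, so that the content of a row determines its slot with no freedom whatsoever. Once this is in hand, the lemma becomes the statement that there is simply no room to rearrange the rows without leaving the class of standard matrices.

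First I would fix a standard matrix $M$ with rows $M_1,\dots,M_n$, where $M_k$ denotes the row occupying position $k$. Standardness of $M$, read through the Remark, says exactly that $k=\mathrm{pos}(M_k)$ for every $k$; that is, each row already sits in its unique legal slot. I would also record the immediate consequence that the rows of $M$ carry pairwise distinct contents, since two rows with identical content $\pm\beta_{ij}$ would be forced into the same position, which is impossible.

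Next, given any $\sigma\in S_n$ for which $\sigma(M)$ is again standard, I would unwind the definition of the action: the row in position $k$ of $\sigma(M)$ is $M_{\sigma^{-1}(k)}$, and the permutation does not alter its content. Applying the Remark to the standard matrix $\sigma(M)$ gives $k=\mathrm{pos}(M_{\sigma^{-1}(k)})$, while applying the Remark to $M$ at the index $\sigma^{-1}(k)$ gives $\mathrm{pos}(M_{\sigma^{-1}(k)})=\sigma^{-1}(k)$. Comparing the two equalities yields $k=\sigma^{-1}(k)$ for all $k$, whence $\sigma=\mathrm{id}$.

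Since the argument is essentially a one-line deduction from the Remark, I do not expect a genuine obstacle; the only points demanding care are purely bookkeeping. I must use the correct direction of the action (the $k$-th row of $\sigma(M)$ is $M_{\sigma^{-1}(k)}$, not $M_{\sigma(k)}$), and I must invoke the forced-position assignment of the Remark for both $M$ and $\sigma(M)$ under the same convention. The observation that distinct rows have distinct contents is what guarantees the forced-position data pins down a unique ordering, and it is the only subsidiary fact I would flag explicitly before concluding.
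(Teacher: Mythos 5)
Your proof is correct and follows essentially the same route as the paper, which simply cites Remark~\ref{rem: standard matrices}: since the content $\pm\beta_{ij}$ of a row forces its position, comparing the forced positions in $M$ and in $\sigma(M)$ yields $\sigma^{-1}(k)=k$ for all $k$. Your write-up merely makes explicit the bookkeeping the paper leaves implicit, so there is nothing to add.
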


\begin{proof} This follows from Remark \ref{rem: standard matrices}.
\end{proof}

\indent Since any nontrivial permutation on a standard matrix results in a non-standard one, we can define what it means to be the associated permutation of a $c$-matrix in $A_n$ straight orientation as long as it is the result of a permutation on a standard matrix:

\begin{definition}
For a matrix $C$ if there exists a standard matrix $M$ such that $C=\rho(M)$, we define the \textit{associated permutation of $C$} as $\rho$.
\end{definition}

\indent The technical lemma to be proven that can almost immediately lead to the theorem is stated below:

\begin{lemma}
In $A_n$ straight orientation, if $x_{i_kj_k}$ (resp. $x_{i_kj_k}^{-1}$) is an allowable mutation of a standard $c$-matrix $C_{k-1}$ then $C_k=(i_k+1,j_k)x_{i_kj_k}C_{k-1}$ (resp. $C_k=(i_k+1,j_k)x_{i_kj_k}^{-1}C_{k-1}$) is also a standard matrix.
\end{lemma}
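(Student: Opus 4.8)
The plan is to compute the effect of the single mutation $x_{i_kj_k}^{\pm}$ on the standard matrix $C_{k-1}$ and to verify that the transposition $(i_k+1,j_k)$ restores all three axioms. Write $i=i_k$, $j=j_k$ and let $I=[i+1,j]$ be the interval supporting $\beta_{ij}$. By Remark~\ref{rem: standard matrices} the rows $+\beta_{ij}$ and $-\beta_{ij}$ are forced into the two positions $i+1$ and $j$ (concretely, $+\beta_{ij}$ in row $i+1$ and $-\beta_{ij}$ in row $j$). Hence the allowable mutation $x_{ij}$ acts at the row carrying $+\beta_{ij}$ and negates it to $-\beta_{ij}$, while $x_{ij}^{-1}$ acts at the row carrying $-\beta_{ij}$ and negates it to $+\beta_{ij}$; in either case the newly negated vector must travel to the \emph{other} of the positions $i+1,j$, and the transposition $(i+1,j)$ performs exactly that interchange. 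So the real content of the lemma is that every \emph{other} row touched by the mutation remains a single interval root in its original position.

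First I would record the sign-coherent $c$-vector mutation rule: mutating at the row $r$ carrying $\varepsilon\beta_{ij}$ (with $\varepsilon=\pm$ the sign of that $c$-vector) negates row $r$ and sends every other row $c_l$ to $c_l+[\varepsilon b_{lr}]_+(\varepsilon\beta_{ij})$. Using $B=CB_0C^t$ and $B_0=E^t-E$, the coefficient $b_{lr}$ equals the antisymmetrised Euler pairing $\langle\beta_{ij},c_l\rangle-\langle c_l,\beta_{ij}\rangle$, which Theorem~\ref{forbidden pairs of walls} lets me rewrite purely in terms of $hom$ and $ext$ of interval modules: same-sign $c$-vectors are Hom-orthogonal, and opposite-sign pairs $\alpha,-\beta$ satisfy $hom(\alpha,\beta)=ext(\alpha,\beta)=0$. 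Combined with the elementary $A_n$ formula $hom(M_{[a,b]},M_{[c,d]})\neq 0$ iff $c\le a\le d\le b$ (and its $ext$ analogue for adjacent gluing), this pins every relevant coefficient to $0$ or $1$ and identifies exactly which rows move.

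The core is then a short case analysis of the rows $c_l$ with $b_{lr}>0$. I expect the modified positive rows to be precisely those of the form $\beta_{l-1,i}$, which telescope to $\beta_{l-1,j}$ via $\beta_{l-1,i}+\beta_{i,j}=\beta_{l-1,j}$ and so stay positive intervals starting at $l$ (hence unmoved); and the modified negative rows $-\beta_{[p+1,l]}$ to become the positive interval $\beta_{[i+1,p]}$ when $l=j$ (the row destined for the swap) but to remain $-\beta_{[j+1,l]}$ when $l>j$ (hence unmoved). Checking the three axioms then reduces to confirming that the swapped rows land correctly — row $i+1$ receiving a positive interval starting at $i+1$, row $j$ receiving $-\beta_{ij}$ — and that all untouched modified rows keep the right sign and position. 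The degenerate case $j=i+1$ (where $\beta_{ij}$ is simple and $(i+1,j)$ is trivial) is handled separately, and the $x_{ij}^{-1}$ statement follows by the symmetric argument with $\varepsilon=-$.

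The main obstacle is the bookkeeping that guarantees each changed row stays a \emph{single} interval root, since a priori a sum such as $-\beta_{[p+1,l]}+\beta_{ij}$ can acquire both a positive and a negative part and violate Axiom~3. The decisive input ruling this out is again Theorem~\ref{forbidden pairs of walls}: I would show that any configuration producing a non-interval sum — for instance a modified negative row with $i<p$ and $l>j$ simultaneously, or a positive interval sitting in row $j$ — forces a nonzero $hom$ or $ext$ between two coexisting $c$-vectors of the forbidden type, contradicting Speyer--Thomas. Once every bad overlap is excluded in this way, the telescoping identities finish the argument and show that the single transposition $(i+1,j)$ repairs exactly the one displaced row, leaving $C_k$ standard.
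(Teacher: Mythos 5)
Your proposal follows the same skeleton as the paper's proof: a row-by-row case analysis, standardness forcing $+\beta_{ij}$ into row $i+1$ and $-\beta_{ij}$ into row $j$ (you have this placement right --- Remark~\ref{rem: standard matrices} actually states the two positions backwards, but the paper's examples and its own proof agree with you), Speyer--Thomas (Theorem~\ref{forbidden pairs of walls}) to restrict which roots can occupy each row, the same telescoping identities, and the observation that the single transposition $(i_k+1,j_k)$ repairs exactly the displaced row. The genuine methodological difference is how you decide which rows move: you compute the coefficients $b_{\ell r}$ from $B=CB_0C^t$ via the Euler form, whereas the paper never computes $b$ at all --- it uses only that each row maps to $c_\ell$ or to $c_\ell+\beta_{ij}$, that the image must again be a root, and post-mutation applications of Theorem~\ref{forbidden pairs of walls} (as in its case (d)(iii)). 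Your heavier route does buy something: it correctly disposes of the row $c_{j+1}=\beta_{j,m}$, where the sum $\beta_{j,m}+\beta_{ij}=\beta_{im}$ \emph{is} a root, so the paper's case (e) claim that ``in all other cases $c_\ell+\beta_{ij}$ is not a root'' is not literally true there; in your framework the ext term gives $b_{j+1,i+1}=-1$, so that row never moves.

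There is, however, a genuine gap in your exclusion step, and it sits exactly at the configuration you single out: $c_\ell=-\beta_{p\ell}$ with $i<p<j<\ell$ coexisting with $c_{i+1}=\beta_{ij}$. The opposite-sign condition in Theorem~\ref{forbidden pairs of walls} is one-directional: it only forbids $hom(\beta_{ij},\beta_{p\ell})\neq 0$ or $ext(\beta_{ij},\beta_{p\ell})\neq 0$, and here $hom(\beta_{ij},\beta_{p\ell})=0$, while your ``ext analogue for adjacent gluing'' also gives zero, because the supports of $\beta_{ij}$ and $\beta_{p\ell}$ overlap rather than abut. (The nonvanishing reverse hom $hom(\beta_{p\ell},\beta_{ij})\neq 0$ is not forbidden by the theorem; perfectly legitimate pairs such as $c_{i+1}=\beta_{ij}$, $c_\ell=-\beta_{i\ell}$ have it too, so you cannot appeal to it.) Thus no forbidden pair is visible with your stated toolkit, your own coefficient computation then yields $b_{\ell,i+1}>0$, the row would move to the non-root $\beta_{ij}-\beta_{p\ell}$, and the proof stalls. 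Two repairs are available. First, use the correct ext criterion for $A_n$ straight orientation, $ext(\beta_{ab},\beta_{cd})\neq 0$ if and only if $a<c\le b<d$, which covers overlapping and not merely adjacent intervals; with it $ext(\beta_{ij},\beta_{p\ell})\neq 0$, the pair genuinely contradicts Theorem~\ref{forbidden pairs of walls}, and your argument closes as written. Second, avoid the exclusion entirely, as the paper does: the mutated matrix is again a $C$-matrix whose rows are roots (the first assertion of Theorem~\ref{forbidden pairs of walls}), so a row whose only possible modification is a non-root simply cannot move; combined with your computation that it would have to move, the configuration never occurs and nothing further need be checked. Either patch makes your proof complete.
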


\begin{proof}
\indent We will only prove in the green case $x_{i_kj_k}$ since the red case is almost identical to the green one. In this proof $i_k$ is simplified as $i$ and $j_k$ is simplified as $j$. We note that, since our quiver is simply laced of finite type, all entries of the exchange matrix $B$ are $0,1$ or $-1$. So, the mutation $\mu_j$ transformed each $c$-vector $c_s\neq c_j$ to either $c_s'=c_s$ or $c_s'=c_s+c_j$.\\
\indent Case 1: If $j - i  = 1$. Here we have a simple root and the associated permutation $(i+1,j)$ is the identity. Hence the proof reduces to $x_{ij}$ transforms a standard matrix to another standard one. $x_{ij}$ merely flips the $j$-th row from $e_j$ to $-e_j$ and may lengthen some $\beta_{\ell i}$ to $\beta_{\ell j}$ for $l<i$ and shorten some $-\beta_{i\ell}$ to $-\beta_{j\ell}$ for $l>j$ without changing which row they are in. Since no other operation will send $c_s$ to a root $c_s'=c_s+c_j$, the resulting matrix is still standard.\\
\indent Case 2: If $j - i > 1$. Here the associated permutation is the transposition $(i+1,j)$. Now let's discuss what $(i+1,j) x_{ij}$ actually does on each row:\\
\indent a) $l\leq i$. Due to Theorem \ref{forbidden pairs of walls} all c-vectors have to be $\pm\beta_{ab}$ for some $0\leq a<b\leq n$. So the only change that can ever happen is that $\beta_{\ell i}$ may be lengthen to $\beta_{\ell j}$. Changing the $C$-matrix is this way does not violate standardness.\\
\indent b) $\ell=i+1$. By assumption, $c_{i+1}=\beta_{ij}$ which changes to $-\beta_{ij}$. Since we are transposing the $i+1$st and $j$-th rows, this change also does not violate standardness of the $C$-matrix.\\
\indent c) $i + 1 < \ell < j$. Here, the only way that the $c$-vector $c_\ell$ can change to $c_\ell'=c_\ell+\beta_{ij}$ and still remain a root is if $c_\ell=-\beta_{i\ell}$. But this violates Theorem \ref{forbidden pairs of walls} since $hom(\beta_{ij},\beta_{i\ell})\neq0$. So, $c_\ell'=c_\ell$ for all $\ell$ in this range.\\
\indent d) $\ell = j$. We notice several facts:\\
\indent (i).The $c$-vector $c_j$ can not be positive since, in that case, $c_j=\beta_{j-1,m})$ which is not Hom-orthogonal to $c_{i+1}=\beta_{ij}$. Hence we can assume that the $j$-th row is $-\beta_{mj}$ for some $m<j$.\\
\indent (ii). $m>i$. Otherwise $hom(\beta_{ij},\beta_{mj})\neq0$ violating Theorem \ref{forbidden pairs of walls}.\\
\indent (iii). Also it is impossible for $c_j=-\beta_{mj}$ to remain itself after doing $x_{ij}$ since, otherwise, $c_j'$ and $c_i'=-\beta_{ij}$ have the same sign but are not Hom-orthogonal.\\
\indent Hence $c_j=-\beta_{mj}$, $m>i$ and $c_j'=\beta_{ij}-\beta_{mj}=\beta_{im}$ which, when transposed to the $i+1$-st row keeps the matrix standard.\\
\indent e) $\ell>j$. $-\beta_{i\ell}$ can be shortened to $-\beta_{j\ell}$. In all other cases, $c_\ell+\beta_{ij}$ is not a root. So all rows are good and the matrix remains standard.\\
\indent Using similar methods we can see that $(i+1, j)x_{ij}^{-1}$ transforms a standard matrix into another one.
\end{proof}

\indent Let's first prove the formula for reddening sequences and in particular maximal green sequences.

\begin{proof}
Since $(i_k+1,j_k)x_{i_kj_k}$ and $(i_k+1,j_k)x_{i_kj_k}^{-1}$ transform standard matrices to standard matrices, the result of transforming the initial standard matrix, $I_n$ by $\prod_{k}(i_k+1,j_k)x_{i_kj_k}^{\delta_k}$ is standard. Since the sequence is a reddening sequence, the result of this transformation has to be a permutation of rows of $-I_n$, which has to be $-I_n$ itself. Hence $\rho(\prod_{k}(i_k+1,j_k)x_{i_kj_k}^{\delta_k})=id$, the formula is correct.
\end{proof}

\indent Then let's prove a weaker version of the formula, namely when the initial $c$-matrix, $C$, is standard and the sequence is a loop sequence.

\begin{proof}
The result of transforming the initial standard matrix, $C$ by $\prod_{k}(i_k+1,j_k)x_{i_kj_k}^{\delta_k}$ is standard. Since the sequence is a loop sequence, the result of this transformation has to be a permutation of rows of $C$, which has to be $C$ itself. Hence $\rho(\prod_{k}(i_k+1,j_k)x_{i_kj_k}^{\delta_k})=id$, the formula is correct.
\end{proof}

\indent Finally let's prove the formula for all loop sequences.

\begin{proof}
Since any $c$-matrix, $C$ in $A_n$ straight orientation is reachable from the framed quiver, we can use the formula to calculate the associated permutation of $C$ which we denote as $\sigma$. Thus $M=\sigma^{-1}C$ is standard. Since picture group elements commute with permutations we can replace $\prod_{k}x_{i_kj_k}^{\delta_k}$ by $\sigma(\prod_{k}x_{i_kj_k}^{\delta_k})\sigma^{-1}$. Then $\sigma(\prod_{k}x_{i_kj_k}^{\delta_k})\sigma^{-1}(C) = \sigma(\prod_{k}x_{i_kj_k}^{\delta_k})(M) = \sigma(\prod_{k}(i_k+1,j_k))^{-1}(M)=\sigma(\prod_{k}(i_k+1,j_k))^{-1}\sigma^{-1}(C)$.
\end{proof}

\subsection{The formula of associated permutation for any mutation sequences}
\indent Due to the theorem we can extend the definition of associated permutations to any arbitrary mutation sequence in $A_n$ straight orientation which reduces to the existing definitions of the associated permutation of reddening and loop sequences due to the theorem above.

\begin{definition}
In $A_n$ straight orientation, the \textit{associated permutation of a mutation sequence} in correspondence to the picture group element $\prod_{k}x_{i_kj_k}^{\delta_k}$ acting on a $c$-matrix with associated permutation $\sigma$ is defined as $\rho(\prod_{k}x_{i_kj_k}^{\delta_k}) = \sigma(\prod_{k}(i_k+1,j_k))^{-1}\sigma^{-1}$. Here $\delta_k\in\{+,-\}$.
\end{definition}

\indent In particular any mutation at a vertex with $c$-vector $\pm\beta_{ij}$ has an associated permutation $\sigma(i+1,j)\sigma^{-1}$ with $\sigma$ the permutation of the $c$-matrix before the mutation. In the special case when $i+1=j$ which is when $\beta_{ij}$ is a simple root the associated permutation is trivial.\\ 
\bibliographystyle{amsplain}

\begin{thebibliography}{10}

\bibitem{BDP13} Thomas Br\"ustle, Gr\'{e}goire Dupont and Matthieu P\'{e}rotin, \textit{On Maximal Green Sequences}, \href{http://arxiv.org/abs/1205.2050}{arXiv:1205.2050 [math.RT]}, 2013.
\bibitem{BHIT15} Thomas Br\"ustle, Stephen Hermes, Kiyoshi Igusa and Gordana Todorov, \textit{Semi-invariant pictures and two conjectures on maximal green sequences}, \href{http://arxiv.org/abs/1503.07945}{arXiv:1503.07945 [math.RT]}, 2015.
\bibitem{FST11} Anna Felikson, Michael Shapiro, Pavel Tumarkin, \textit{Cluster algebras and triangulated orbifolds}, \href{http://arxiv.org/abs/1111.3449}{arXiv:1111.3449 [math.CO]}, 2014.
\bibitem{FZ01} Sergey Fomin and Andrei Zelevinsky, \textit{Cluster algebras I: foundations},  	\href{http://arxiv.org/abs/math/0104151}{arXiv:math/0104151 [math.RT]}, 2001.
\bibitem{GM14} Alexander Garver and Gregg Musiker, \textit{On Maximal Green Sequences For Type A Quivers}, \href{http://arxiv.org/abs/1403.6149}{arXiv:1403.6149 [math.CO]}, 2014.
\bibitem{IOTW3} \textemdash \textemdash \textemdash \ , Kent Orr, Gordana Todorov and Jerzy Weyman, \textit{Modulated semi-invariants}, \href{http://arxiv.org/abs/1507.03051}{arXiv:1507.03051 [math.RT]}, 2015.
\bibitem{IOTW4} Kiyoshi Igusa, Kent Orr, Gordana Todorov and Jerzy Weyman, \textit{Picture groups of finite type and cohomology in type $A_n$}, unpublished preprint, 2014.
\bibitem{IT17} \textemdash \textemdash \textemdash \ and Gordana Todorov, \textit{Picture groups and maximal green sequences}, unpublished preprint 2014.
\bibitem{Kel11} Bernhard Keller, \textit{Quiver mutation and quantum dilogarithm identities}, New developments in noncommutative algebra and its applications talk note, Isle of Skye, 2011
\bibitem{Mul15} Gregory Muller, \textit{The existence of a maximal green sequence is not invariant under quiver mutation}, \href{http://arxiv.org/abs/1503.04675}{arXiv:1503.04675}, 2015.
\bibitem{NZ11} Tomoki Nakanishi and Andrei Zelevinsky, \textit{On tropical dualities in cluster algebras} 	 \href{http://arxiv.org/abs/1101.3736}{arXiv:1101.3736 [math.RA]}, 2011
\bibitem{ST12} David Speyer, Hugh Thomas, \textit{Acyclic cluster algebras revisited}, \href{http://arxiv.org/abs/1203.0277}{arXiv:1203.0277}, 2015.

\end{thebibliography}

\end{document}